\newtheorem{thm}{Theorem}[section]
\newtheorem{lem}[thm]{Lemma}
\newtheorem{prop}[thm]{Proposition}
\newtheorem{cor}[thm]{Corollary}
\theoremstyle{definition}
\newtheorem{dfn}[thm]{Definition}
\newtheorem{ques}[thm]{Question}
\newtheorem{rem}[thm]{Remark}
\newtheorem{conv}[thm]{Convention}
\theoremstyle{remark}
\newtheorem*{ac}{Acknowlegments}
\numberwithin{equation}{thm}
\def\add{\operatorname{\mathsf{add}}}
\def\ass{\operatorname{Ass}}
\def\depth{\operatorname{depth}}
\def\G{\mathcal{G}}
\def\height{\operatorname{ht}}
\def\le{\leqslant}
\def\m{\mathfrak{m}}
\def\Min{\operatorname{Min}}
\def\mod{\operatorname{\mathsf{mod}}}
\def\p{\mathfrak{p}}
\def\q{\mathfrak{q}}
\def\u{\mathfrak{u}}
\def\m{\mathfrak{m}}
\def\a{\mathfrak{a}}
\def\b{\mathfrak{b}}
\def\spec{\operatorname{Spec}}
\def\syz{\Omega}
\def\X{\mathcal{X}}
\def\R{\mathcal{R}}
\def\U{\mathrm{U}}
\def\V{\mathrm{V}}
\def\rfd{\operatorname{rfd}}
\def\Rfd{\operatorname{Rfd}}
\def\cmd{\operatorname{cmd}}
\def\grade{\operatorname{grade}}
\def\dim{\operatorname{dim}}
\def\cd{\operatorname{CM-dim}}
\def\L{\mathsf{L}}
\def\s{\mathsf{S}}
\def\G{\mathsf{G}}
\def\Y{\mathsf{Y}}
\begin{document}
\allowdisplaybreaks
\title[Annihilators of local cohomology modules and restricted flat dimensions]{Annihilators of local cohomology modules\\ and restricted flat dimensions}
\author{Glenn ando}
\address{Graduate School of Mathematics, Nagoya University, Furocho, Chikusaku, Nagoya 464-8602, Japan}
\email{m21003v@math.nagoya-u.ac.jp}
\begin{abstract}
Yoshizawa investigated when local cohomology modules have an annihilator that does not depend on the choice of the defining ideal. In this paper we refine his results and investigate the relationship between annihilators of local cohomology modules and restricted flat dimensions.
\end{abstract}
\maketitle
\section{Introduction}
Throughout this paper, let $R$ be a commutative noetherian ring.

The annihilators of local cohomology modules have been widely studied.
For example, Faltings' annihilator theorem \cite{Fal} states that if $R$ is a homomorphic image of a regular ring and $M$ is a finitely generated $R$-module, then, for ideals $\a$ and $\b$ of $R$, there exists an integer $n$ such that $\b^n H_\a^i(M) = 0$ for all integers $ i < \lambda_\a^\b(M)$, where $\lambda_\a^\b(M) = \inf\{\depth M_\p + \height(\a + \p)/\p \mid \p \in \spec R \setminus V(\b) \}$; see also \cite[Theorem 9.5.1]{BS}.

Yoshizawa \cite{TY} investigated the following question by using Takahashi's classification theorem of the dominant resolving subcategories of the category $\mod{R}$ of finitely generated $R$-modules \cite[Theorem 5.4]{RT}.

\begin{ques}
Let $M$ be a finitely generated $R$-module.
Let $\p$ be a prime ideal of $R$.
When does there exist an element $s \in R \setminus \p $ such that  $s H_I^i(M) = 0$ for all ideals $I$ of $R$ and all integers $i < \grade(I,R)$?
\end{ques}

Note that the annihilator $s$ does not depend on the choice of the ideals $I$.

The purpose of this paper is to investigate the relationship between such annihilators of local cohomology modules and large/small restricted flat dimensions.

First, we deal with \cite[Proposition 2.5(4),(5)]{TY}, which states that if $R$ is a ring of finite Krull dimension or a Cohen-Macaulay ring, then the resolving subcategory $\R(\p)$ of $\mod{R}$ is dominant for all prime ideals $\p$ of $R$, where 
    $$
    \R(\p) = \left\{
    M \in \mod R \; \middle|
    \begin{tabular}{c}
      There exists  $s \in R \setminus \p$ such that  $s H_I^i(M) = 0$ \\
      for all ideals $I$ of $R$ and all integers $i < \grade(I,R)$
    \end{tabular}
    \right\}.
    $$
We prove the dominance of $\R(\p)$ without assuming that $R$ has finite Krull dimension or is a Cohen-Macaulay ring.

\begin{thm}\label{20}
Let $\p$ be a prime ideal of $R$.
The resolving subcategory $\R(\p)$ of $\mod R$ is dominant.
\end{thm}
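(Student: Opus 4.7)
My plan is to establish the dominance of $\R(\p)$ in two stages: first verify the resolving axioms, and then tackle the dominance condition itself. The resolving axioms are mostly formal. Containment of $R$ is immediate because $H_I^i(R)=0$ for $i<\grade(I,R)$ by definition of grade, so $s=1$ works. Closure under direct summands is trivial. For extensions, given $0\to L\to M\to N\to 0$ with witnesses $s_L,s_N \in R\setminus\p$, the long exact sequence in local cohomology shows that the product $s_L s_N$ annihilates $H_I^i(M)$ for $i<\grade(I,R)$, and $s_L s_N \in R\setminus\p$ because $\p$ is prime. For syzygies, a sequence $0\to \syz M \to R^n \to M \to 0$ combined with the vanishing $H_I^i(R^n)=0$ for $i<\grade(I,R)$ reduces the annihilator problem for $\syz M$ to a shift of that for $M$, so the original witness $s_M$ carries over.

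For dominance, my strategy is to exploit the local-at-$\p$ nature of the defining condition of $\R(\p)$. An element $s\in R\setminus\p$ becomes a unit in $R_\p$, so membership in $\R(\p)$ is controlled by a uniform vanishing statement over $R_\p$, and conversely, by clearing denominators, any vanishing statement of the form $H_I^i(M)_\p=0$ can be promoted to an honest annihilator in $R\setminus\p$. This localization principle should replace the global hypotheses (finite Krull dimension or Cohen--Macaulayness) used by Yoshizawa, whose role there was to produce witnesses via Koszul complexes on a system of parameters or a maximal regular sequence. In place of these, I would work with regular sequences inside $\p R_\p$ whose length is controlled by $\grade(\p,R)$ rather than by $\dim R$ or $\depth R$, and then lift the resulting Koszul-type modules back to $\mod R$ to obtain the required members of $\R(\p)$.

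The hardest step will be the dominance verification itself, namely ensuring that the lifted witnesses realize the full range allowed by the grade-consistent function attached to $\R(\p)$ in \cite[Theorem 5.4]{RT}. Without a uniform bound on dimension or depth, one must argue prime by prime, and the passage between different primes is not automatic. I expect this can be handled by taking witnesses of the form $R/(\xx)$, where $\xx$ is a regular sequence inside $\p$ (or more generally a suitable quotient by such a sequence), because their local cohomology is computed by the Koszul cohomology of $\xx$ and is therefore manifestly annihilated by an element of $R\setminus\p$ in the relevant range. The delicate point will be to verify, in the general Noetherian setting, that such a construction produces enough modules to fulfil the dominance axiom in Takahashi's sense, and to organise the argument so that it does not implicitly reintroduce a finiteness assumption on $\dim R$ or a Cohen--Macaulay hypothesis.
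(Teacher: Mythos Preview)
Your resolving verification is fine (and in any case is already Proposition~\ref{30}, quoted from \cite{TY}). The dominance argument, however, is only a plan, and it contains a genuine gap together with a missed shortcut.

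\textbf{The gap.} Your ``clearing denominators'' step asserts that from $H_I^i(M)_\p=0$ for all ideals $I$ and all $i<\grade(I,R)$ one can extract a \emph{single} $s\in R\setminus\p$ annihilating all of the $H_I^i(M)$. This is precisely the implication $(\G)\Rightarrow(\Y)$ in Theorem~\ref{19}, which the paper leaves open (see the ``$\bullet$'' entry in the table following Proposition~\ref{18}). The obstruction is real: there are infinitely many ideals $I$, and each $H_I^i(M)$ is typically not finitely generated, so vanishing after localization at $\p$ does not automatically produce a uniform annihilator. Your Koszul witnesses $R/(\xx)$ with $\xx$ an $R$-regular sequence in $\p$ do not obviously circumvent this: the Koszul resolution only yields $H_I^i(R/(\xx))=0$ for $i<\grade(I,R)-\mathrm{length}(\xx)$, leaving a window of possibly nonzero cohomology in the range $\grade(I,R)-\mathrm{length}(\xx)\le i<\grade(I,R)$ for which you supply no annihilator in $R\setminus\p$.

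\textbf{The missed shortcut.} The paper avoids all of this by never attempting to produce a nontrivial annihilator. Instead it invokes \cite[Corollary~4.6]{RT}, which says a resolving subcategory $\X$ is dominant as soon as $\syz_R^{\Rfd_R M}M\in\X$ for every nonzero $M$. It then proves (Proposition~\ref{2}) the stronger, $\p$-independent statement that $\syz_R^{s}M$ with $s=\rfd_R M$ satisfies $\grade(I,\syz_R^s M)\ge\grade(I,R)$ for \emph{every} ideal $I$, by iterating the grade lemma along a free resolution; hence $H_I^i(\syz_R^s M)=0$ outright for $i<\grade(I,R)$, so $\syz_R^s M\in\R(\p)$ with witness $1$. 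Since $\Rfd_R M\ge\rfd_R M$ and $\R(\p)$ is closed under syzygies, $\syz_R^{\Rfd_R M}M\in\R(\p)$ as well. This bypasses both the localization-and-lift manoeuvre and any explicit construction of witness modules, and it works uniformly over all primes without ever invoking the grade-consistent-function classification.
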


Using the above theorem and large/small restricted flat dimensions, we obtain the following theorem, the second assertion of which removes from \cite[Theorem 4.1]{TY} the assumption that the resolving subcategory $\R(\p)$ of $\mod{R}$ is dominant.
\begin{thm}\label{21}
Let $M$ be a finitely generated $R$-module.
Let $\p$ be a prime ideal of $R$.
Consider the following three conditions.
\begin{enumerate}[\quad\rm(a)]
    \item 
    The module $M$ belongs to $\R(\p)$, that is, there exists an element $s \in R \setminus \p$ such that $s H_I^i(M) = 0$ for all ideals $I$ of $R$ and all integers $i < \grade(I,R)$.
    \item
    The large restricted flat dimension $\Rfd_{R_\p}  M_\p$ is at most $0$.
    \item
    The small restricted flat dimension $\rfd_{R_\p} M_\p$ is at most $0$.
\end{enumerate}
Then the following statements hold.
\begin{enumerate}[\rm(1)]
    \item 
    The implications ${\rm(a)}\Leftarrow{\rm(b)}\Rightarrow{\rm(c)}$ always hold.
    \item
    The implication ${\rm(a)}\Rightarrow{\rm(b)}$ holds if $\grade(\q,R)=\depth{R_\q}$ for all $\q \in \U(\p)$.
    \item
    The implication ${\rm(a)}\Leftarrow{\rm(c)}$ holds if $\cmd{R_\p} \le 1$ or $\cd_{R_\p}M_\p < \infty$.
    \item
    The implication ${\rm(a)}\Rightarrow{\rm(c)}$ holds if $\cd_{R_\p} M_\p < \infty$ and $\grade(\p,R) = \depth R_\p$.
\end{enumerate}
\end{thm}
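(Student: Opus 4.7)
The plan is to translate each of the three conditions into statements about depths at localizations, then match these against standard characterizations of the large and small restricted flat dimensions and feed the outcome back through Theorem~\ref{20}. Two facts set up the dictionary. First, $\grade(I,R) = \inf_{\q \in V(I)} \depth R_\q$ and $\grade(I,M) = \inf_{\q \in V(I)} \depth_{R_\q} M_\q$, so the annihilation of $H^i_I(M)$ for $i < \grade(I,R)$ is controlled by a depth comparison between $R$ and $M$. Second, $\Rfd_{R_\p} M_\p \le 0$ is equivalent to $\depth R_\q \le \depth_{R_\q} M_\q$ at every prime $\q \subseteq \p$, whereas $\rfd_{R_\p} M_\p \le 0$ records the same inequality tested only against modules of finite projective dimension.

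For part (1), the comparison $\rfd \le \Rfd$ applied to $M_\p$ over $R_\p$ gives (b)$\Rightarrow$(c). For (b)$\Rightarrow$(a), the hypothesis supplies $\depth_{R_\q} M_\q \ge \depth R_\q$ for every $\q \subseteq \p$, hence $\grade(JR_\p, M_\p) \ge \grade(JR_\p, R_\p)$ for every ideal $J \subseteq \p$; the relevant local cohomology vanishes after localizing at $\p$, and a clearing-denominators argument produces an element $s \in R \setminus \p$ that annihilates $H^i_I(M)$ globally in the required range, so that $M \in \R(\p)$. For part (2), given $s \in R \setminus \p$ annihilating $H^i_I(M)$ for $i < \grade(I,R)$, the hypothesis $\grade(\q,R) = \depth R_\q$ for all $\q \in \U(\p)$ converts this grade-based vanishing into the depth inequality $\depth_{R_\q} M_\q \ge \depth R_\q$ at each such $\q$, which is precisely the characterization of $\Rfd_{R_\p} M_\p \le 0$.

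Parts (3) and (4) are handled by exploiting conditions under which $\rfd$ at $\p$ controls the same depth data as $\Rfd$. When $\cmd R_\p \le 1$, the locus where depth drops below dimension is narrow enough that (c) forces the full depth inequality needed to deduce (a) via the argument of part (1); when $\cd_{R_\p} M_\p < \infty$, an Auslander--Bridger-type equality for finite CM-dim converts (c) directly into the depth comparison $\depth_{R_\q} M_\q \ge \depth R_\q$ at each $\q \subseteq \p$, after which (a) again follows from part (1). Part (4) runs the converse direction under the same finite-CM-dim hypothesis, strengthened by $\grade(\p,R) = \depth R_\p$: the annihilator and the AB formula for CM-dim together yield the depth inequalities required for $\rfd_{R_\p} M_\p \le 0$. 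The main obstacle throughout is aligning grade (which drives the definition of $\R(\p)$) with depth (which drives $\Rfd$ and $\rfd$): the hypotheses in (2)--(4) are precisely the bridges needed in each direction, and the finite-CM-dim case is the subtlest, since one must control $M$ at all specializations within $\U(\p)$ rather than only at $\p$ itself.
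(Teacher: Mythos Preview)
Your proof of (b)$\Rightarrow$(a) contains a genuine gap. You argue that once $H^i_{IR_\p}(M_\p)=0$ for $i<\grade(IR_\p,R_\p)$, a ``clearing-denominators argument'' produces a single $s\in R\setminus\p$ annihilating $H^i_I(M)$ for all ideals $I$ and all $i<\grade(I,R)$. This fails on two counts: local cohomology modules are typically not finitely generated, so $H^i_I(M)_\p=0$ does not give one element $s\notin\p$ killing all of $H^i_I(M)$ even for a single pair $(I,i)$; and even granting that, there are infinitely many ideals $I$, so there is no mechanism to make the annihilator uniform. You announce in your plan that you will ``feed the outcome back through Theorem~\ref{20}'', but in the actual argument for (b)$\Rightarrow$(a) you never do.

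The paper's route here is entirely different and is where the real content lies. One first proves that $\R(\p)$ is a \emph{dominant} resolving subcategory (Theorem~\ref{20}); this is not a formality---it rests on Proposition~\ref{2} and the inequality $\rfd_R\le\Rfd_R$. Takahashi's classification theorem for dominant resolving subcategories \cite[Theorem~1.1]{RT} then says that membership in such a subcategory is detected by the depth function $\q\mapsto\depth R_\q-\depth M_\q$: the hypothesis $\Rfd_{R_\p}M_\p\le0$ is exactly the statement that this function is $\le0$ on $\U(\p)$, and the classification forces $M\in\R(\p)$. No localization or denominator-clearing is involved; the uniform annihilator is produced by the structure theory of resolving subcategories. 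Your sketches for (2)--(4) are in the right spirit (the paper likewise reduces (3) and (4) to showing $\rfd_{R_\p}M_\p=\Rfd_{R_\p}M_\p$ under the stated hypotheses and then invokes (b)$\Rightarrow$(a)), but they all ultimately lean on (b)$\Rightarrow$(a), and that is precisely the step your proposal does not establish.
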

Here $\cmd$ and $\cd$ stand for Cohen-Macaulay defect and Cohen-Macaulay dimension, respectively. 
The first assertion of Theorem \ref{21} leads to the following corollary, which removes from \cite[Corollary 4.3]{TY} all the assumptions.

\begin{cor}\label{22}
Let $M$ be a finitely generated $R$-module, and let $\p$ be a prime ideal of $R$.
Then there exists an element $s \in R \setminus \p$ such that
$s H_I^i(M) = 0$ for all ideals $I$ of $R$ and all integers $i < \grade(I,R) - \Rfd_{R_\p} M_\p$.
\end{cor}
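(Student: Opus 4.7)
The plan is to reduce Corollary \ref{22} to Theorem \ref{21}(1) via a syzygy argument. If $\Rfd_{R_\p} M_\p = \infty$, then the condition $i < \grade(I,R) - \Rfd_{R_\p} M_\p$ is vacuous and any $s \in R \setminus \p$ works, so assume $n := \Rfd_{R_\p} M_\p$ is finite. Fix a projective resolution $\cdots \to F_1 \to F_0 \to M \to 0$ by finitely generated free $R$-modules, and set $N := \Omega^n M$.

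The key input is the inequality $\Rfd_{R_\p} N_\p \le 0$. This follows from the standard behavior of the large restricted flat dimension under syzygies (it drops by at least one whenever it is positive) combined with the compatibility of syzygies with localization, up to a free summand that does not affect $\Rfd$. Granted this, the implication (b) $\Rightarrow$ (a) of Theorem \ref{21}(1), applied to $N$, produces an element $s \in R \setminus \p$ such that $s H_J^j(N) = 0$ for every ideal $J$ of $R$ and every integer $j < \grade(J,R)$.

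It remains to transport this annihilator from $N$ back to $M$. For each $k$, break the resolution into the short exact sequence $0 \to \Omega^{k+1} M \to F_k \to \Omega^k M \to 0$. Since $F_k$ is a finitely generated free $R$-module, $H_I^j(F_k) = 0$ for all $j < \grade(I,R)$, and the associated long exact sequence for local cohomology yields isomorphisms $H_I^i(\Omega^k M) \cong H_I^{i+1}(\Omega^{k+1} M)$ whenever $i+1 < \grade(I,R)$. Iterating $n$ times, one obtains $H_I^i(M) \cong H_I^{i+n}(N)$ for all $i < \grade(I,R) - n$. Combining with the annihilation above gives $s H_I^i(M) = 0$ for exactly the indices $i < \grade(I,R) - \Rfd_{R_\p} M_\p$ required.

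The main obstacle I expect is verifying the first step, namely $\Rfd_{R_\p} N_\p \le 0$, since this requires invoking the syzygy and localization behavior of $\Rfd$ from its definition through $\Tor$-vanishing against modules of finite projective dimension (or citing the appropriate lemmas from the literature on restricted homological dimensions). Everything else — the long exact sequence computation and the application of Theorem \ref{21}(1) — is routine.
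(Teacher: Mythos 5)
Your argument is essentially the paper's proof: the paper runs an induction on $r=\Rfd_{R_\p}M_\p$, peeling off one syzygy at a time via the depth lemma applied to $0\to\Omega^1_RM\to P\to M\to 0$ (localized at each $\q\subseteq\p$) together with the long exact sequence of local cohomology and the vanishing $H_I^i(P)=0$ for $i<\grade(I,R)$; this is exactly your ``pass to $N=\Omega^nM$, check $\Rfd_{R_\p}N_\p\le 0$, apply (b)$\Rightarrow$(a), and dimension-shift back'' argument unrolled, and the step you flag as the main obstacle is precisely the depth-lemma computation the paper carries out. The one genuine slip is in the degenerate case: $\Rfd_{R_\p}M_\p$ is never $+\infty$ for finitely generated $M$ (it lies in $\mathbb{N}\cup\{-\infty\}$), but it can equal $-\infty$, namely when $M_\p=0$; under the paper's stated convention $-(-\infty)=\infty$ the corollary then asserts $sH_I^i(M)=0$ for \emph{all} $i$, which is not vacuous and is not reached by your reduction (there is no $\Omega^{-\infty}M$). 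That case needs the separate observation that $M_\p=0$ gives some $s\in R\setminus\p$ with $sM=0$, whence $sH_I^i(M)=0$ for all $I$ and all $i$ by $R$-linearity of the local cohomology functors.
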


The organization of this paper is as follows.
In Section 2, we state our convention, basic notions and their properties for later use.
In Section 3, we prove Theorem \ref{20}.
In Section 4, we investigate the relationship between annihilators of local cohomology modules and large/small restricted flat dimensions, and prove a result part of which is Theorem \ref{21}, and Corollary \ref{22}.
\section{Basic definitions and properties}\label{p}
In this section, we give several definitions and their properties.
We begin with our convention.

\begin{conv}
All rings are commutative noetherian rings with identity. Let $R$ be a (commutative noetherian) ring. We denote by $\mod R$ the category of (finitely generated) $R$-modules.
All subcategories of $\mod R$ are full and closed under isomorphism.
The symbol $\mathbb{N}$ denotes the set of non-negative integers. 
\end{conv}

First, we recall the notions of a dominant subcategory and a resolving subcategory of $\mod R$.
We denote by $\syz_R^n M$ the $n$-th syzygy module of a finitely generated $R$-module $M$.
Note that $\syz_R^n M$ is uniquely determined up to projective summands.

\begin{dfn}
Let $\X$ be a subcategory of $\mod R$.
\begin{enumerate}[(1)]
\item
We denote by $\add\X$ the subcategory of $\mod R$ consisting of the direct summands of finite direct sums of modules in $\X$.
\item
We say that $\X$ is {\em dominant} if, for each $\p \in \spec{R}$, there exists a non-negative integer $n$ such that $\syz_{R_\p}^n\kappa(\p) \in \add\X_\p$, where $\kappa(\p) = R_\p/{\p R_\p}$ and $\X_\p = \{X_\p \in \mod R_\p \mid X \in \X \}$.
\item
We say that $\X$ is {\em resolving} if it satisfies the following four conditions.
\begin{enumerate}[(i)]
    \item $\X$ contains the projective $R$-modules.
    \item $\X$ is closed under direct summands, that is, if $M$ is an $R$-module belonging to $\X$ and $N$ is a direct summand of $M$, then $N$ also belongs to $\X$.
    \item $\X$ is closed under extensions, that is, for an exact sequence $0 \to L \to M \to N \to 0$ of $R$-modules, if $L$ and $N$ belong to $\X$, then $M$ also belongs to $\X$.
    \item $\X$ is closed under syzygies, that is, if $M$ is an $R$-module belonging to $\X$, then $\syz_R^1 M$ also belongs to $\X$.
\end{enumerate}
\end{enumerate}
\end{dfn}

Next, we recall the subcategory $\R(\p)$ of $\mod{R}$ and the subset $\U(\p)$ of $\spec{R}$ introduced by Yoshizawa \cite{TY} and a property of $\R(\p)$.
\begin{dfn}
Let $\p$ be a prime ideal of $R$.
\begin{enumerate}[(1)]
    \item 
    We define the subcategory $\R(\p)$ of $\mod R$ relative to $\p$ by
    $$
    \R(\p) = \left\{
    M \in \mod R \; \middle|
    \begin{tabular}{c}
      There exists  $s \in R \setminus \p$ such that  $s H_I^i(M) = 0$ \\
      for all ideals $I$ of $R$ and all integers $i < \grade(I,R)$
    \end{tabular}
    \right\}.
    $$
    \item
    We denote by $\U(\p)$ the generalization closed subset $\{ \q \in \spec{R} \mid \q \subseteq \p \}$ of $\spec{R}$.
\end{enumerate}
\end{dfn}

\begin{prop} \rm{(\cite[Proposition 2.5(3)]{TY})\label{30}}
Let $\p$ be a prime ideal of $R$.
Then the subcategory $\R(\p)$ is a resolving subcategory of $\mod R$. 
\end{prop}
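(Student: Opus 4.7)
To prove Proposition \ref{30}, the plan is to verify directly the four axioms (i)--(iv) of a resolving subcategory. Throughout, the key tool is the long exact sequence of local cohomology associated to a short exact sequence, together with the observation that if $P$ is projective then $H_I^i(P) = 0$ for all $i < \grade(I,R)$, because local cohomology commutes with direct sums and by definition $H_I^i(R) = 0$ in that range.

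For (i), any projective $P$ lies in $\R(\p)$ with witness $s = 1$, by the vanishing just mentioned. For (ii), if $M = N \oplus N' \in \R(\p)$ with witness $s$, then additivity of $H_I^i$ gives $s H_I^i(N) = 0$ and $s H_I^i(N') = 0$, so $N, N' \in \R(\p)$. These two steps are straightforward.

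For (iii), given $0 \to L \to M \to N \to 0$ with $L, N \in \R(\p)$ and witnesses $s_L, s_N \in R \setminus \p$, I would set $s = s_L s_N$. Since $R \setminus \p$ is multiplicatively closed, $s \in R \setminus \p$. For any $x \in H_I^i(M)$ with $i < \grade(I,R)$, the image of $x$ in $H_I^i(N)$ is killed by $s_N$, so $s_N x$ comes from some $y \in H_I^i(L)$ by the long exact sequence; then $s_L s_N x$ is the image of $s_L y = 0$, hence $s x = 0$. This shows $M \in \R(\p)$.

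For (iv), take $M \in \R(\p)$ with witness $s$ and choose a presentation $0 \to \syz_R^1 M \to P \to M \to 0$ with $P$ projective. The long exact sequence yields, for each $i < \grade(I,R)$,
\[
H_I^{i-1}(P) \to H_I^{i-1}(M) \to H_I^i(\syz_R^1 M) \to H_I^i(P),
\]
and both outer terms vanish since $i-1, i < \grade(I,R)$ and $P$ is projective. Thus $H_I^i(\syz_R^1 M)$ is a quotient of $H_I^{i-1}(M)$ (and is zero when $i=0$), so $s$ annihilates it. Because $\syz_R^1 M$ is only defined up to projective summands, the resulting membership of \emph{any} choice of first syzygy in $\R(\p)$ follows from (i) together with the fact that $\R(\p)$ is closed under finite direct sums, which is immediate from (iii) applied to the split sequence. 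No step presents a real obstacle: the only mild subtlety is the boundary case $i=0$ in (iv), handled by the embedding $H_I^0(\syz_R^1 M) \hookrightarrow H_I^0(P) = 0$.
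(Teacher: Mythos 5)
Your verification of the four resolving axioms is correct: the witness $s=1$ for projectives, additivity for summands, the product $s_Ls_N$ chased through the long exact sequence for extensions, and the identification of $H_I^i(\syz_R^1M)$ with a quotient of $H_I^{i-1}(M)$ for syzygies (plus the well-definedness up to projective summands) all go through. The paper does not prove this proposition itself but imports it from Yoshizawa \cite[Proposition 2.5(3)]{TY}; your argument is the standard one underlying that reference, so there is nothing to compare beyond noting that your direct verification is sound.
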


\begin{rem}
Let $M$ be a finitely generated $R$-module, and let $\p$ be a prime ideal of $R$.
Since $1_R \in R \setminus \p$, the equalities $H_I^i(M) = 0$ for all ideals $I$ of $R$ and all integers $i < \grade(I,R)$ imply that $M$ belongs to $\R(\p)$.
\end{rem}

Finally, we recall the definition of large and small restricted flat dimensions and some properties.
\begin{dfn}
Let $M$ be a finitely generated $R$-module.
\begin{enumerate}[(1)]
    \item 
    The {\em large restricted flat dimension} of $M$ is defined by 
    $$
    \Rfd_R M = \sup_{\p \in \spec{R}} \left\{\depth{R_\p} - \depth{M_\p} \right\}.
    $$
    Note that $\Rfd_{R_\p}{M_\p} = \sup_{\q \in \U(\p)} \{ \depth{R_\q} - \depth{M_\q}\}$ holds for each $\p \in \spec{R}$.
    \item
    The {\em small restricted flat dimension} of $M$ is defined by 
    $$
    \rfd_R M = \sup_{\p \in \spec{R}} \left\{\grade(\p,R) - \grade(\p,M) \right\}.
    $$
     Note that $\rfd_{R_\p}{M_\p} = \sup_{\q \in \U(\p)} \{\grade(\q R_\p,R_\p) - \grade(\q R_\p,M_\p)\}$ holds for each $\p \in \spec{R}$.
\end{enumerate}
We should remark that the original definitions of large/small restricted flat dimensions are different; they are similar to the definition of flat dimension from which those names come, and the equalities in the above definition turn out to hold.
One has $\Rfd_R M$, $\rfd_R M \in \mathbb{N} \cup \{-\infty \}$, and $\Rfd_R M$, $\rfd_R M = -\infty$ if and only if $M = 0$.
Also, $\Rfd_R M \geq \rfd_R M$.
For the details of large/small restricted flat dimensions, we refer the reader to \cite[Theorem 1.1]{AIL} and \cite[Theorem 2.4, 2.11 and Observation 2.10]{CFF}.
\end{dfn}

\section{Dominance of $\R(\p)$}\label{g}
Throughout this section, let $M$ be a finitely generated $R$-module, and let $\p$ be a prime ideal of $R$.
In this section, we will prove that the resolving subcategory $\R(\p)$ of $\mod{R}$ is always dominant.
We begin with the following lemma, which will be used several times later.
\begin{lem}\label{1}
One has $\rfd_R M = \sup \left\{ \grade(I,R) - \grade(I,M) \mid I \text{ is a proper ideal of } R \right\}$.
\end{lem}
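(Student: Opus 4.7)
The plan is to prove both inequalities directly using elementary facts about grade. The inequality $\rfd_R M \le \sup\{\grade(I, R) - \grade(I, M) \mid I \text{ proper}\}$ is immediate from the definition of $\rfd_R M$, since every prime $\p \in \spec R$ is a proper ideal of $R$; the supremum on the right therefore automatically dominates the one defining $\rfd_R M$.

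For the reverse inequality, my plan is to fix an arbitrary proper ideal $I$ and produce a prime $\p \supseteq I$ satisfying $\grade(\p, R) - \grade(\p, M) \ge \grade(I, R) - \grade(I, M)$; taking the supremum over $I$ then yields the desired bound. The crucial input is the classical formula
\[
\grade(I, M) \;=\; \min\{\depth M_\p \mid \p \in \spec R,\ \p \supseteq I\},
\]
valid for any proper ideal $I$ and finitely generated $M$. Once this is available, I choose $\p \supseteq I$ achieving the minimum, so that $\depth M_\p = \grade(I, M)$. Localizing an $M$-regular sequence in $\p$ to $R_\p$ shows $\grade(\p, M) \le \depth M_\p$, while the inclusion $I \subseteq \p$ forces $\grade(\p, M) \ge \grade(I, M)$; thus $\grade(\p, M) = \grade(I, M)$. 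The same inclusion yields $\grade(\p, R) \ge \grade(I, R)$, and subtraction produces the required inequality.

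The verification of the grade formula itself will proceed by induction on $g = \grade(I, M)$. In the base case $g = 0$ the ideal $I$ consists of zerodivisors on $M$, so by prime avoidance $I \subseteq \p$ for some $\p \in \ass M$, and every such $\p$ has $\depth M_\p = 0$. In the inductive step with $g > 0$, I will select an $M$-regular element $x \in I$, note that $\grade(I, M/xM) = g - 1$, invoke the inductive hypothesis to obtain $\p \supseteq I$ with $\depth(M/xM)_\p = g - 1$, and conclude $\depth M_\p = g$ using that $x \in \p$ is $M_\p$-regular. I expect the only real obstacle to be this induction; everything else, including the edge case $M = 0$ where both sides equal $-\infty$ by convention, reduces to routine bookkeeping.
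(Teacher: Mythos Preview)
Your proposal is correct and follows essentially the same strategy as the paper: for a given proper ideal $I$, produce a prime $\p \supseteq I$ with $\grade(\p,M)=\grade(I,M)$ and then use $\grade(\p,R)\ge\grade(I,R)$. The only tactical difference is that the paper obtains such a $\p$ directly as an associated prime of $M/\xx M$ for a maximal $M$-regular sequence $\xx$ in $I$ (citing \cite[Propositions 1.2.1 and 1.2.10(d)]{BH}), whereas you pass through the formula $\grade(I,M)=\min_{\p\supseteq I}\depth M_\p$ (which is \cite[Proposition 1.2.10(a)]{BH}, and which the paper itself invokes later in Proposition~\ref{42}); your inductive proof of that formula is fine but unnecessary once one is willing to cite \cite{BH}.
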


\begin{proof}
By the definition of small restricted flat dimension, there is an inequality 
$$
\rfd_R M \leq \sup \left\{ \grade(I,R) - \grade(I,M) \mid I \text{ is a proper ideal of } R \right\}.
$$
We show that the reverse inequality holds.
Let $I$ be a proper ideal of $R$ and put $n = \grade(I,M)$.
Then there exists an $M$-regular sequence $x = x_1,...,x_n$ in $I$.
By \cite[Proposition 1.2.10(d)]{BH}, we have $\grade(I,M/xM) = 0$.
Hence there exists $\p \in \ass(M/xM)$ with $\p \supseteq I$ such that $\grade(\p,M) = n = \grade(I,M)$ by \cite[Proposition 1.2.1 and 1.2.10(d)]{BH}.
Since $\p \supseteq I$, we have $\grade(\p,R) \geq \grade(I,R)$.
Hence we obtain $\grade(\p,R) - \grade(\p,M) \geq \grade(I,R) - \grade(I,M)$.
Thus the assertion follows.
\end{proof}

Using the above lemma, we can prove the following proposition.
\begin{prop}\label{2}
Suppose that the module $M$ is nonzero.
Then $\syz_R^s M \in \R(\p)$, where $s:=\rfd_RM$. 
\end{prop}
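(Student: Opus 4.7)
The plan is to prove the stronger statement that $H_I^i(\syz_R^s M) = 0$ for \emph{every} ideal $I$ of $R$ and every $i < \grade(I, R)$. Once this is established, the required annihilator can be taken to be $1 \in R \setminus \p$, so $\syz_R^s M$ lies in $\R(\p)$---in fact for every prime $\p$ simultaneously.

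The main tool is an elementary syzygy--grade inequality valid for any finitely generated $R$-module $N$ and any ideal $I$:
\begin{equation*}
\grade(I, \syz_R^1 N) \ge \min\{\grade(I, N) + 1,\; \grade(I, R)\}.
\end{equation*}
Choosing a presentation $0 \to \syz_R^1 N \to F \to N \to 0$ with $F$ free, the long exact sequence of local cohomology places $H_I^i(\syz_R^1 N)$ between $H_I^{i-1}(N)$ and $H_I^i(F)$, and both outer terms vanish whenever $i-1 < \grade(I, N)$ and $i < \grade(I, R)$. Iterating by induction on $s$ gives
\begin{equation*}
\grade(I, \syz_R^s M) \ge \min\{\grade(I, M) + s,\; \grade(I, R)\}.
\end{equation*}
Now Lemma \ref{1} enters: since $s = \rfd_R M$, it yields $s \ge \grade(I, R) - \grade(I, M)$ for every proper ideal $I$, so $\grade(I, M) + s \ge \grade(I, R)$. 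Consequently $\grade(I, \syz_R^s M) \ge \grade(I, R)$, which is the desired vanishing. The unit ideal $I = R$ is vacuous.

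There is no serious obstacle. The only wrinkle is that $\syz_R^s M$ is determined only up to free summands; but local cohomology of a free $R$-module vanishes in degrees below $\grade(I, R)$, so the conclusion is insensitive to this ambiguity. Equivalently, Proposition \ref{30} says $\R(\p)$ is resolving, hence it contains projectives and is closed under direct summands, making membership of $\syz_R^s M$ in $\R(\p)$ unambiguous. The substance of the argument is just to recognize that Lemma \ref{1} is precisely the tool needed to upgrade the supremum in the definition of $\rfd_R M$ (originally indexed only by primes) to the uniform bound over all proper ideals $I$ required in the syzygy estimate.
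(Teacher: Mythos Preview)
Your proof is correct and follows essentially the same approach as the paper: both arguments establish the stronger statement $\grade(I,\syz_R^s M)\ge\grade(I,R)$ for every proper ideal $I$ by iterating the syzygy--grade inequality and invoking Lemma~\ref{1}, then conclude via the characterization $H_I^i(-)=0$ for $i<\grade(I,-)$. The only cosmetic difference is that the paper cites the grade lemma \cite[Proposition~1.2.9]{BH} directly, whereas you re-derive that inequality from the long exact sequence of local cohomology.
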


\begin{proof}
Note that $s \in \mathbb{N}$.
Let $I$ be an proper ideal of $R$.
We show that the inequality $\grade(I,\syz_R^s M) \geq \grade(I,R)$ holds.
Let $s = 0$.
Then we obtain $\grade(I,\syz_R^s M) = \grade(I,M) \geq \grade(I,R)$ by Lemma \ref{1}.
Suppose that $s \geq 1$.
Since $\grade(I,M) + s \geq \grade(I,R)$ by Lemma \ref{1}, the grade lemma \cite[Proposition 1.2.9]{BH} yields
\begin{align*}
\grade(I,\syz_R^s M)
&\geq \min \{ \grade(I,R),\grade(I,\syz_R^{s - 1} M) + 1 \} \geq ... \\
&\geq \min \{ \grade(I,R),\grade(I,M) + s \} \\
&= \grade(I,R).
\end{align*}
Since $H_I^i (\syz_R^s M) = 0$ for all integers $ i < \grade(I,\syz_R^s M)$ by \cite[Theorem 6.2.7]{BS} and the local cohomology functor $H_R^i (-)$ is the zero functor for all integers $i$, we obtain $H_I^i (\syz_R^s M) = 0$ for all ideals $I$ of $R$ and all integers $i < \grade(I,R)$.
Therefore, $\syz_R^s M$ belongs to $\R(\p)$.
\end{proof} 

Now we can archive the purpose of this section by showing the following corollary, which is none other than Theorem \ref{20}.
Note that the subcategory $\R(\p)$ is a resolving subcategory of $\mod R$ by Proposition \ref{30}.
\begin{cor}\label{3}
The resolving subcategory $\R(\p)$ of $\mod{R}$ is dominant.
\end{cor}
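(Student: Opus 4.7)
The plan is to deduce dominance of $\R(\p)$ directly from Proposition \ref{2}, applied to the cyclic modules $R/\q$ as $\q$ ranges over $\spec R$.

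Unwinding the definition, I need to show that for every $\q \in \spec R$ there exists $n \in \mathbb{N}$ with $\syz_{R_\q}^n \kappa(\q) \in \add \R(\p)_\q$. Fix such a prime $\q$. The module $R/\q$ is a nonzero finitely generated $R$-module, so I can set $s := \rfd_R (R/\q)$ and invoke Proposition \ref{2} to conclude $\syz_R^s (R/\q) \in \R(\p)$.

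Next I would localize at $\q$. Since the localization of a projective resolution is a projective resolution, and since syzygies are defined only up to projective summands, $(\syz_R^s (R/\q))_\q$ agrees with $\syz_{R_\q}^s \kappa(\q)$ up to a finitely generated projective $R_\q$-summand. Because $\R(\p)$ is a resolving subcategory of $\mod R$ by Proposition \ref{30}, it contains $R$; hence $R_\q \in \R(\p)_\q$, and every finitely generated free $R_\q$-module lies in $\add \R(\p)_\q$. Combining these observations yields $\syz_{R_\q}^s \kappa(\q) \in \add \R(\p)_\q$, which is the required dominance condition at $\q$.

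The only subtle point is the usual projective-summand ambiguity when localizing syzygies; this is harmless here because $\add$ is closed under direct summands and finite direct sums, and $R_\q$ is already in $\add \R(\p)_\q$. I therefore do not anticipate a serious obstacle: the argument reduces to picking the right test module (namely $R/\q$), feeding it to Proposition \ref{2}, and localizing.
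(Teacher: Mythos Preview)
Your argument is correct, and it takes a somewhat different route from the paper's. The paper does not verify the definition of dominance directly; instead it invokes \cite[Corollary~4.6]{RT}, which reduces dominance of a resolving subcategory $\X$ to the statement that $\syz_R^{\Rfd_R M} M \in \X$ for every nonzero $M \in \mod R$. The paper then obtains this from Proposition~\ref{2} together with the inequality $\Rfd_R M \ge \rfd_R M$ and the fact that $\R(\p)$ is closed under syzygies. By contrast, you bypass the external criterion entirely: you test only the cyclic modules $R/\q$, apply Proposition~\ref{2} to each, and localize. Your approach is more self-contained (no black box from \cite{RT}, no appearance of the large restricted flat dimension) at the mild cost of the Schanuel/localization bookkeeping you flagged; the paper's approach, on the other hand, yields the additional information that $\syz_R^{\Rfd_R M} M \in \R(\p)$ for \emph{every} finitely generated $M$, not just for $R/\q$.
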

\begin{proof}
Suppose that the module $M$ is nonzero.
Put $s = \rfd_R M$ and $r = \Rfd_R M$.
In view of \cite[Corollary 4.6]{RT}, it is enough to show that $\syz_R^r M$ belongs to $\R(\p)$.
By Proposition \ref{2}, we obtain $\syz_R^{s} M \in \R(\p)$.
Since $r \geq s $, we have $r-s \geq 0$ and $\syz_R^r M = \syz_R^{r-s}(\syz_R^s M)$ belongs to $\R(\p)$.
Thus the assertion follows.
\end{proof}

\section{Relationships of $\R(\p)$ with restricted flat dimensions}\label{}
In this section, we investigate the relationship between annihilators of local cohomology modules and large/small restricted flat dimensions.

First, we introduce two invariants defined similarly to large/small restricted flat dimensions.
\begin{dfn}
Let $M$ be a finitely generated $R$-module.
\begin{enumerate}[\rm(1)]
    \item We set $\Rfd'_R{M} := \sup_{\p \in \spec{R}} \{ \depth{R_\p} - \grade(\p, M) \}$.
    Note that $\Rfd'_R{M} \geq \Rfd_R{M}$, and that $\Rfd'_{R_\p}{M_\p} = \sup_{\q \in \U(\p)} \{ \depth{R_\q} - \grade(\q R_\p, M_\p) \}$ for each $\p \in \spec{R}$.
    \item We set $\xi(\p, M) := \sup_{\q \in \U(\p)} \{ \grade(\q, R) - \grade(\q R_\p, M_\p) \}$ for each $\p \in \spec{R}$.
    Note that $\rfd_{R_\p}{M_\p} \geq \xi(\p, M)$ for each $\p \in \spec{R}$.
\end{enumerate}
\end{dfn}

We can also define similar invariants
$$
\rfd'_R{M}=\sup_{\p \in \spec{R}}\{\grade(\p,R)-\depth{M_\p}\}
$$
for a finitely generated $R$-module $M$, and
$$
\xi'(\p,M)=\sup_{\q \in \U(\p)}\{\grade(\q,R)-\depth{M_\q}\}
$$
for a prime ideal $\p$ of $R$ and a finitely generated $R$-module $M$.
These turn out to coincide with $\rfd_R{M}$ and $\xi(\p,M)$, respectively.
\begin{prop}\label{42}
Let $M$ be a finitely generated $R$-module.
\begin{enumerate}[\rm(1)]
    \item One has $\rfd_R{M} = \sup_{\p \in \spec{R}} \{ \grade(\p, R) - \depth{M_\p} \}$.
    \item One has $\xi(\p, M) = \sup_{\q \in \U(\p)} \{ \grade(\q, R) - \depth{M_\q} \}$ for each $\p \in \spec{R}$.
\end{enumerate}
\end{prop}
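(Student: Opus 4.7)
The plan is to prove both equalities by the same two-step recipe: one inequality comes from the general comparison $\grade(\p, M) \le \depth M_\p$, and the reverse from producing, for each prime entering the supremum, a prime over it at which $\depth M_\q$ is exactly $\grade(\p, M)$.

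For the first step, I would note that $\grade(\p, M) = \inf\{i \mid \Ext_R^i(R/\p, M) \ne 0\}$, and localization at $\p$ gives $\Ext_R^i(R/\p, M)_\p \simeq \Ext_{R_\p}^i(\kappa(\p), M_\p)$; hence if $\Ext_R^i(R/\p, M) = 0$ then so is $\Ext_{R_\p}^i(\kappa(\p), M_\p)$, which yields $\grade(\p, M) \le \depth M_\p$ for every $\p \in \spec R$. Plugging this into the defining suprema of $\rfd_R M$ and $\xi(\p, M)$ gives $\rfd_R M \ge \sup_{\p \in \spec R}\{\grade(\p, R) - \depth M_\p\}$ and $\xi(\p, M) \ge \sup_{\q \in \U(\p)}\{\grade(\q, R) - \depth M_\q\}$, which are the $\ge$ directions of (1) and (2).

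For the reverse inequalities I would isolate the following depth-attainment statement already implicit in the proof of Lemma \ref{1}: for each proper ideal $\a$ of $R$ there is a prime $\q \supseteq \a$ with $\depth M_\q = \grade(\a, M)$. Indeed, letting $n = \grade(\a, M)$ and $\xx = x_1, \ldots, x_n$ a maximal $M$-regular sequence in $\a$, \cite[Proposition 1.2.10(d)]{BH} gives $\grade(\a, M/\xx M) = 0$, whence \cite[Proposition 1.2.1]{BH} supplies $\q \in \ass(M/\xx M)$ with $\q \supseteq \a$; then $\depth(M/\xx M)_\q = 0$ and $\xx$ remains $M_\q$-regular, so $\depth M_\q = n$. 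Taking $\a = \p$ and using $\grade(\q, R) \ge \grade(\p, R)$ gives (1). For (2), I would run the same argument inside the ring $R_\p$ with $\a = \q R_\p$ for a given $\q \in \U(\p)$; this yields a prime $\q'$ of $R$ with $\q \subseteq \q' \subseteq \p$, hence $\q' \in \U(\p)$, at which $\depth M_{\q'} = \grade(\q R_\p, M_\p)$, and $\grade(\q', R) \ge \grade(\q, R)$ then closes the inequality.

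The main obstacle, and really only a bookkeeping point, is in (2): one has to be careful that the associated prime of $M_\p/\xx M_\p$ over $R_\p$ produced by the depth-attainment argument comes from a prime of $R$ contained in $\p$, so that the resulting $\q'$ lands in $\U(\p)$. This is the standard correspondence between associated primes under localization for finitely generated modules, so no serious difficulty is expected.
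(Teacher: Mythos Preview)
Your proposal is correct and follows essentially the same route as the paper: establish the easy inequality via $\grade(\p,M)\le\depth M_\p$, and for the reverse produce, for each prime $\p$ (respectively $\q\in\U(\p)$), a prime above it at which $\depth$ of the localized module equals the relevant grade, then use monotonicity of $\grade(-,R)$. The only difference is cosmetic: the paper invokes \cite[Proposition 1.2.10(a)]{BH} (the formula $\grade(\a,M)=\inf\{\depth M_\q\mid \q\in\V(\a)\}$) directly to obtain the depth-attaining prime, whereas you reprove that statement by hand via a maximal $M$-regular sequence and an associated prime of the quotient.
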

\begin{proof}
(1) It is enough to show that $\rfd_R{M} \leq \sup_{\p \in \spec{R}} \{ \grade(\p, R) - \depth{M_\p} \}$.
We take $\p \in \spec{R}$ such that $\rfd_R{M} = \grade(\p, R) - \grade(\p, M)$.
By \cite[Proposition 1.2.10(a)]{BH}, we find $\u \in \V(\p)$ such that $\grade(\p, M) = \depth{M_\u}$.
Since $\p \subseteq \u$, we get $\grade(\p, R) \leq \grade(\u, R)$.
Hence we obtain $\rfd_R{M} = \grade(\p, R) - \grade(\p, M) \leq \grade(\u, R) - \depth{M_\u}$.
Thus the assertion follows.

(2) Let $\p \in \spec{R}$.
It is enough to show that $\xi(\p, M) \leq \sup_{\q \in \U(\p)} \{ \grade(\q, R) - \depth{M_\q} \}$.
We take $\q \in \U(\p)$ such that $\xi(\p, M) = \grade(\q, R) - \grade(\q R_\p, M_\p)$.
By \cite[Proposition 1.2.10(a)]{BH}, we find $\u \in \U(\p)$ with $\q \subseteq \u$ such that $\grade(\q R_\p, M_\p) = \depth{(M_\p)_{\u R_\p}}= \depth{M_\u}$.
Since $\q \subseteq \u$, we get $\grade(\q, R) \leq \grade(\u, R)$.
Hence we obtain $\xi(\p,M) = \grade(\q, R) - \grade(\q R_\p, M_\p) \leq \grade(\u , R) - \depth{M_\u}$.
Thus the assertion follows.
\end{proof}

To state the theorem below, we introduce Cohen-Macaulay defect and Cohen-Macaulay dimension.
\begin{dfn}
\begin{enumerate}[(1)]
    \item 
    For a local ring $R$, the {\em Cohen-Macaulay defect} is defined by $\cmd R = \dim R - \depth R$.
    \item
    We denote by $\cd_R M$ the {\em Cohen-Macaulay dimension} of a finitely generated $R$-module $M$. For the definition and their basic properties, we refer the reader to \cite[\S 3]{AG}.
\end{enumerate}
\end{dfn}

The following theorem is one of the main results of this paper, which is a detailed version of Theorem \ref{21}.
\begin{thm}\label{19}
Let $M$ be a finitely generated $R$-module, and let $\p$ be a prime ideal of $R$.
Consider the following five conditions.
\begin{enumerate}[]
\item
$(\L)'$ There is an inequality $\Rfd'_{R_\p}M_\p \leq 0$. 
\item
$(\L)$ There is an inequality $\Rfd_{R_\p}M_\p \leq 0$. 
\item
$(\s)$ There is an inequality $\rfd_{R_\p}M_\p \leq 0$.
\item
$(\G)$ There is an inequality $\xi(\p,M) \leq 0$.
\item
$(\Y)$ The module $M$ belongs to $\R(\p)$.
\end{enumerate}
Then the following statements hold.
\begin{enumerate}[\rm(1)]
\item
\begin{enumerate}[\rm(a)]
\item
Condition $(\s)$ is satisfied if and only if $H_{IR_\p}^i(M_\p) = 0$ for all ideals $I$ of $R$ and all integers $i < \grade(IR_\p, R_\p)$.
\item
Condition $(\G)$ is satisfied if and only if $H_{IR_\p}^i(M_\p) = 0$ for all ideals $I$ of $R$ and all integers $i < \grade(I, R)$.
\end{enumerate}
\item
The following implications always hold true.
$$
\xymatrix@R-2.5pt{
(\L)'\ar@{=>}[r]&(\L)\ar@{=>}[rd]\ar@{=>}[r]&(\s)\ar@{=>}[r]&(\G)\\
&&(\Y)\ar@{=>}[ru]
}
$$
\item
\begin{enumerate}[\rm(a)]
\item
If $\p \in \Min{R}$, then condition $(\L)'$ is satisfied.
\item
If $\p \in \ass{R}$, then condition $(\s)$ is satisfied.
\item
If $\grade(\p, R) = 0$, then condition $(\G)$ is satisfied.
\end{enumerate}
\item
Suppose that $\cd_{R_\p} M_\p < \infty $. Then the equivalence $(\L) \Leftrightarrow (\s)$ holds true.
Hence the implication $(\s) \Rightarrow (\Y)$ holds true.
\item
Suppose that $\cmd{R_\p} \leq 1 $. Then the equivalences $(\L)' \Leftrightarrow (\L) \Leftrightarrow (\s)$ hold true.
Hence the implication $(\s) \Rightarrow (\Y)$ holds true.
\item
Suppose that $\cd_{R_\p} M_\p < \infty$ and $\grade(\p,R) = \depth R_\p$.
Then the implication $(\G) \Rightarrow (\L)$ holds true.
Hence the equivalences $(\L) \Leftrightarrow (\s) \Leftrightarrow (\G) \Leftrightarrow (\Y)$ hold true.
\item
Suppose that $\grade(\q,R) = \grade(\q R_\p,R_\p)$ for all $\q \in \U(\p)$. Then the equivalence $(\s) \Leftrightarrow (\G)$ holds true. 
Hence the implication $(\Y) \Rightarrow (\s)$ holds true.
\item
Suppose that $\grade(\q,R) = \depth R_\q$ for all $\q \in \U(\p)$. Then the equivalences $(\L)' \Leftrightarrow (\L) \Leftrightarrow (\s) \Leftrightarrow (\G) \Leftrightarrow (\Y)$ hold true.
\item
Suppose that $(R, \m)$ is a local ring and $\p = \m$.
Then the equivalences $(\s) \Leftrightarrow (\G) \Leftrightarrow (\Y)$ hold true.
\item
Suppose that $(R,\m)$ is a local ring with $\dim R \leq 2$.
Then the implication $(\s) \Rightarrow (\Y)$ holds true.
\end{enumerate}
\end{thm}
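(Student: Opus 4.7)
The plan is to do a case analysis on the height of $\p$, which satisfies $\height \p \le \dim R \le 2$ since $R$ is local. Parts (3)(a), (5), and (9) of the theorem together supply the desired implication in three complementary regimes, and every prime of a local ring of dimension at most two falls into exactly one of them.

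If $\height \p = 0$, then $\p$ is a minimal prime of $R$, and part (3)(a) above yields that condition $(\L)'$ is satisfied; the implication chain in part (2) then delivers $(\Y)$ with no need to invoke $(\s)$ at all. If $\height \p = 1$, then $\dim R_\p = 1$, so $\cmd R_\p = 1 - \depth R_\p \le 1$, and part (5) supplies precisely the implication $(\s) \Rightarrow (\Y)$ we need. Finally, if $\height \p = 2$, then since $\dim R \le 2$ and $R$ is local with unique maximal ideal $\m$ of height $\dim R$, the prime $\p$ must coincide with $\m$; part (9) gives $(\s) \Leftrightarrow (\Y)$, so $(\Y)$ again follows from $(\s)$.

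I do not expect any genuine obstacle: the statement is really a packaging of earlier parts via a three-way case split, and the only point worth verifying is that the three regimes are exhaustive. That exhaustiveness holds because in a local ring $(R,\m)$ with $\dim R \leq 2$, a prime of height $2$ is forced to equal $\m$ (which is why the assumption $\dim R \le 2$ cannot be relaxed by this argument — for $\dim R \ge 3$, a height-$2$ non-maximal prime $\p$ would yield $\cmd R_\p$ possibly equal to $2$, landing outside the scope of both parts (5) and (9)). The only mild subtlety is to note that in the height-$0$ case the conclusion is free of the hypothesis, while in the height-$1$ and height-$2$ cases the hypothesis $(\s)$ is actually used to invoke parts (5) and (9) respectively.
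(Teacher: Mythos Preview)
Your proof is correct and follows essentially the same approach as the paper's: a case split on $\height\p$ that invokes part~(5) for small height and part~(9) when $\p=\m$. The only cosmetic difference is that the paper treats $\height\p\le 1$ in one stroke via part~(5) (since $\cmd R_\p\le\dim R_\p=\height\p\le 1$ covers height~$0$ as well), whereas you peel off $\height\p=0$ separately via part~(3)(a); that extra split is unnecessary but harmless.
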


\begin{proof}
The latter assertions of statements (4)--(7) follow from the former ones and (2).

(1a) There are equivalences
\begin{align*}
    (\s)
    &\Leftrightarrow \grade(IR_\p, M_\p) \geq \grade(IR_\p,R_\p) \text{ for all ideals } I \text{ of } R\\
    &\Leftrightarrow H_{IR_\p}^i(M_\p) = 0 \text{ for all ideals } I \text{ of } R \text{ and all integers } i < \grade(IR_\p,R_\p),
\end{align*}
where the first equivalence follows from Lemma \ref{1} and the second follows from \cite[Theorem 6.2.7]{BS}.

(1b) Suppose that $\grade(\q R_\p, M_\p) \geq \grade(\q, R)$ for all $\q \in \U(\p)$.
Let $I$ be a ideal of $R$.
If $IR_\p = R_\p$, then $\grade(IR_\p, M_\p) = \infty \geq \grade(I, R)$.
Suppose that $IR_\p$ is a proper ideal of $R_\p$.
By the proof of Lemma \ref{1}, we obtain $\q \in \U(\p)$ with $\q \supseteq I$ such that $\grade(\q R_\p, M_\p) = \grade(IR_\p, M_\p)$.
Hence $\grade(IR_\p, M_\p) = \grade(\q R_\p, M_\p) \geq \grade(\q, R) \geq \grade(I, R)$.
Thus there are equivalences
\begin{align*}
    (\G)
    &\Leftrightarrow \grade(IR_\p, M_\p) \geq \grade(I, R) \text{ for all ideals } I \text{ of } R\\
    &\Leftrightarrow H_{IR_\p}^i(M_\p) = 0 \text{ for all ideals } I \text{ of } R \text{ and all integers } i < \grade(I, R),
\end{align*}
where the last equivalence follows from \cite[Theorem 6.2.7]{BS}.

(2) Since $\Rfd'_{R_\p}{M_\p} \geq \Rfd_{R_\p}{M_\p} \geq \rfd_{R_\p}{M_\p} \geq \xi(\p,M)$, $(\L)' \Rightarrow (\L) \Rightarrow (\s) \Rightarrow (\G)$ hold true.
Next, we show that $(\L) \Rightarrow (\Y)$ holds true.
Suppose that $\Rfd_{R_\p}M_\p \leq 0$.
Then we have $\depth{R_\q} - \depth{M_\q} \leq 0$ for all $\q \in \U(\p)$.
Since the resolving subcategory $\R(\p)$ of $\mod{R}$ is dominant by Corollary \ref{3}, it follows from \cite[Theorem 1.1]{RT} that $M$ belongs to $\R(\p)$.
Hence $(\L) \Rightarrow (\Y)$ holds true.
Finally, we show that $(\Y) \Rightarrow (\G)$ holds true.
Suppose that there exists an element $s \in R \setminus \p$ such that $s H_I^i(M) = 0 $ for all ideals $I$ of $R$ and all integers $ i < \grade(I,R) $.
Then \cite[Corollary 4.3.3]{BS} yields $(s/1)H_{IR_\p}^i(M_\p)= 0 $ for all ideals $I$ of $R$ and all integers $ i < \grade(I,R) $.
Since $s \in R \setminus \p$, the element $s/1$ is a unit of $R_\p$.
Hence we get $H_{IR_\p}^i(M_\p) = 0$ for all ideals $I$ of $R$ and all integers $ i < \grade(I,R) $.
Hence $(\G)$ follows from (1b).
Therefore, $(\Y) \Rightarrow (\G)$ holds true.

(3a) Suppose that $\p \in \Min{R}$.
Then we have $\U(\p) = \{\p\}$ and $\depth{R_\p} = 0$.
Hence we obtain $\Rfd'_{R_\p}M_\p = \depth R_\p - \grade(\p R_\p, M_\p) \leq \depth R_\p = 0$.
Thus $(\L)'$ holds.

(3b) Suppose that $\p \in \ass{R}$.
Then we have $\depth{R_\p} = 0$.
Hence we obtain $\rfd_{R_\p}M_\p \leq \sup_{\q \in \U(\p)} \{ \grade(\q R_\p, R_\p)\} = \depth R_\p = 0$.
Thus $(\s)$ holds.

(3c) Suppose that $\grade(\p, R) = 0$.
Let $I$ be an ideal of $R$ and $i$ be an integer with $i < \grade(I, R)$.
If $I \subseteq \p$, then we have $\grade(I, R) \leq \grade(\p, R) = 0$.
Hence we get $\grade(I, R) = 0$ and $i < 0$.
Thus we obtain $H_{IR_\p}^i(M_\p) = 0$.
Suppose that $I \not\subseteq \p$.
Then we have $IR_\p = R_\p$.
Hence the local cohomology functor $H_{IR_\p}^i(-)$ is the zero functor.
Thus we obtain $H_{IR_\p}^i(M_\p) = 0$.
It follows from (1b) that $(\G)$ holds.

(4) By \cite[Theorem 2.8]{CFF} and \cite[Theorem 3.8]{AG}, we have $\Rfd_{R_\p}M_\p = \cd_{R_\p}M_\p = \depth{R_\p} - \depth{M_\p}$.
Then there are inequalities and equalities
\begin{align*}
\cd_{R_\p}M_\p
&= \Rfd_{R_\p}M_\p\\
&\geq \rfd_{R_\p}M_\p\\
&\geq \depth R_\p - \depth M_\p\\
&= \cd_{R_\p}M_\p,
\end{align*}
where the second inequality follows from \cite[Observation 2.12]{CFF}.
Hence we obtain the equalities $\Rfd_{R_\p}M_\p = \rfd_{R_\p}M_\p = \cd_{R_\p}M_\p = \depth R_\p - \depth M_\p$.
Thus $(\L) \Leftrightarrow (\s)$ holds true.

(5) By \cite[Lemma 3.1(ii)]{CFF}, we have $\grade(\q R_\p, R_\p) = \depth{R_\q}$ for all $\q \in \U(\p)$.
Then we obtain $\Rfd'_{R_\p}{M_\p} = \Rfd_{R_\p}{M_\p} = \rfd_{R_\p}{M_\p}$.
Thus $(\L)' \Leftrightarrow (\L) \Leftrightarrow (\s)$ hold true.

(6) If $\xi(\p,M) \leq 0$, then $\depth R_\p = \grade(\p,R) \leq \grade(\p R_\p, M_\p) = \depth M_\p$.
Hence we have $\Rfd_{R_\p}M_\p = \cd_{R_\p}M_\p = \depth R_\p - \depth M_\p \leq 0 $, where the first equality follows from \cite[Theorem 2.8]{CFF} and the second follows from \cite[Theorem 3.8]{AG}.
Thus $(\G) \Rightarrow (\L)$ holds true.

(7) We have $\rfd_{R_\p}M_\p = \xi(\p, M)$.
Thus $(\s) \Leftrightarrow (\G)$ holds true.

(8) We have $\Rfd'_{R_\p}M_\p = \Rfd_{R_\p}M_\p = \rfd_{R_\p}M_\p = \xi(\p, M)$.
Thus $(\L)' \Leftrightarrow (\L) \Leftrightarrow (\s) \Leftrightarrow (\G) \Leftrightarrow (\Y)$ hold true.

(9) Note that $R=R_\m=R_\p$.
We see from (7) that $(\s) \Leftrightarrow (\G)$ holds, and there are equivalences
\begin{align*}
    (\Y)
    &\Leftrightarrow H_{I}^i(M) = 0 \text{ for all ideals } I \text{ of } R \text{ and all integers } i < \grade(I, R)\\
    &\Leftrightarrow H_{IR_\m}^i(M_\m) = 0 \text{ for all ideals } I \text{ of } R \text{ and all integers } i < \grade(I, R)\\
    &\Leftrightarrow (\G),
\end{align*}
where the first equivalence holds since any element of $R \setminus \m$ is a unit of $R$ and the third follows from (1b).

(10) If $\height \p \leq 1$, then $\cmd R_\p = \dim R_\p - \depth R_\p \leq \height \p \leq 1$.
Then $(\s) \Rightarrow (\Y)$ holds true by (5).
Let $\height \p = 2$.
Since $R$ is local and $\dim R \leq 2$, we have $\p = \m$.
It follows from (9) that $(\s) \Rightarrow (\Y)$ holds true.
\end{proof}
Here, we give a remark on when the assumptions posed in the statements of the above theorem are satisfied.
\begin{rem}
Let $M$ be a finitely generated $R$-module, and let $\p$ be a prime ideal of $R$.
\begin{enumerate}[(1)]
    \item 
    Suppose that the local ring $R_\p$ is Cohen-Macaulay.
    Then $\cd_{R_\p}M_\p < \infty$ and $\grade(\q, R) = \grade(\q R_\p, R_\p) = \depth{R_\q}$ for all $\q \in \U(\p)$ by \cite[Theorem 3.9]{AG} and \cite[Theorem 2.1.3]{BH}.
    \item
    Suppose that the $R_\p$-module $M_\p$ has finite projective dimension or more generally finite Gorenstein dimension.
    Then $\cd_{R_\p}M_\p < \infty$ by \cite[Theorem 3.7]{AG}.
    \item
    Let $R = k \llbracket X,Y \rrbracket /(X^2, XY)$, where $k$ is a field, and let $\p = (X,Y)R$.
    Then $R_\p = R$ is not a Cohen-Macaulay ring.
    However, since $\cmd{R} = 1$, we have $\grade(\q, R) = \grade(\q R_\p, R_\p) = \depth{R_\q}$ for all $\q \in \U(\p)$ by \cite[Lemma 3.1]{CFF}.
\end{enumerate}
\end{rem}

Next, we investigate which implication does not necessarily hold.
\begin{prop}\label{18}
Let $R$ be a local ring with $\depth R = 0$.
Then the following assertions hold.
\begin{enumerate}[\rm(1)]
\item
One has $\R(\p) = \mod{R}$.
\item
If $\dim{R} \geq 2$, then there exists $\p \in \spec{R} $ such that $\depth R_\p > 0$.
\item
Suppose $\dim{R} \geq 2$, and let $M = R/\p$ for such a prime ideal $\p$ as in {\rm(2)}.
\begin{enumerate}[\rm(a)]
\item
One has $\rfd_{R_\p}M_\p > 0$.
Hence the implication $(\Y) \Rightarrow (\s)$ does not hold for $M$ and $\p$.
Thus none of the implications $(\Y) \Rightarrow (\L)'$, $(\Y) \Rightarrow (\L)$, $(\G) \Rightarrow (\L)'$, $(\G) \Rightarrow (\L)$ and $(\G) \Rightarrow (\s)$ holds for $M$ and $\p$.
\item
One has $\Rfd_R{M} > 0 = \rfd_R{M}$.
Hence the implication $(\s) \Rightarrow (\L)$ does not hold for $M$ and $\m$.
Thus none of the implications $(\s) \Rightarrow (\L)'$, $(\G) \Rightarrow (\L)'$ and $(\G) \Rightarrow (\L)$ holds for $M$ and $\m$.
\item
One has $\Rfd'_R{R} > 0 = \Rfd_R{R}$.
Hence the implication $(\L) \Rightarrow (\L)'$ does not hold for $R$ and $\m$.
\end{enumerate}
\end{enumerate}
\end{prop}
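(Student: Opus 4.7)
The plan is to handle each part in turn, with the main obstacle being part (2).

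Part (1) is essentially formal: since $R$ is local with $\depth R = 0$, we have $\m \in \ass R$, and hence $\grade(I, R) = 0$ for every proper ideal $I$ of $R$. The range of integers $i < \grade(I, R)$ appearing in the defining condition of $\R(\p)$ is therefore empty for proper $I$ and the condition is trivial for $I = R$ (where $H_R^i$ vanishes identically); taking $s = 1 \in R \setminus \p$ shows $\R(\p) = \mod R$.

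Part (2) is the heart of the argument. I would invoke the classical fact that a noetherian local ring of Krull dimension $\ge 2$ has infinitely many prime ideals; combined with the finiteness of $\ass R$, this produces a prime $\p \notin \ass R$, and via the bijection $\ass R_\p \leftrightarrow \{\q \in \ass R : \q \subseteq \p\}$ any such $\p$ has $\depth R_\p > 0$. If a self-contained argument is preferred, one passes to a minimal prime $\q$ of $R$ with $\dim R/\q \ge 2$ and, in the noetherian local domain $R/\q$, rules out a finite list of height-one primes using prime avoidance together with Krull's principal ideal theorem.

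For part (3)(a), I compute directly at $\p$: since $M_\p = \kappa(\p)$, $\grade(\p R_\p, M_\p) = 0$ while $\grade(\p R_\p, R_\p) = \depth R_\p > 0$, so $\rfd_{R_\p} M_\p > 0$ and $(\s)$ fails. But $(\Y)$ holds for $M$ at $\p$ by (1), and hence $(\G)$ holds via $(\Y) \Rightarrow (\G)$ from Theorem \ref{19}(2); this rules out all six listed implications from $(\Y)$ or $(\G)$ into $(\s)$, $(\L)$, or $(\L)'$.

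For parts (3)(b) and (3)(c), I take the prime to be $\m$ and reuse the vanishing of grades from (1). For $M = R/\p$ every proper ideal has grade $0$, so $\rfd_R M = 0$, while plugging $\q = \p$ into the supremum defining $\Rfd_R M$ gives $\Rfd_R M \ge \depth R_\p - \depth \kappa(\p) = \depth R_\p > 0$. For $M = R$ trivially $\Rfd_R R = 0$, whereas $\Rfd'_R R \ge \depth R_\p - \grade(\p, R) = \depth R_\p > 0$ since $\grade(\p, R) = 0$. These numerical inequalities dispatch the remaining listed implications.
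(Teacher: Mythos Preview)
Your proof is correct and, for parts (1), (3a), (3b), (3c), essentially identical to the paper's argument: both exploit that $\depth R=0$ forces $\grade(I,R)=0$ for all proper $I$, and then read off the needed inequalities by evaluating the defining suprema at $\q=\p$.

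The one genuine difference is part (2). The paper dispatches it in one line by citing \cite[Lemma 1.4]{CFF}, which produces a prime $\p$ with $\depth R_\p=\dim R-1>0$. Your route is instead to observe that a noetherian local ring of dimension $\ge 2$ has infinitely many primes while $\ass R$ is finite, so some $\p\notin\ass R$ exists, and for such $\p$ one has $\p R_\p\notin\ass R_\p$, hence $\depth R_\p>0$. Both are valid; the paper's citation yields the sharper value $\depth R_\p=\dim R-1$, which is not needed here, while your argument is more self-contained and avoids an external reference. Your proposed fallback via prime avoidance and Krull's principal ideal theorem in $R/\q$ is also sound but unnecessary once the infinitude-of-primes fact is granted.
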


\begin{proof}
(1) Since $\depth R = 0$, we have $\grade(I,R) = 0$ for all proper ideals $I$ of $R$.
Since the local cohomology functor $H_I^i(-)$ is the zero functor for all ideals $I$ of $R$ and all integers $i < 0$, we obtain $H_I^i(M) = 0$ for all finitely generated $R$-module $M$, all ideals $I$ of $R$ and all integers $i < \grade(I,R)$.
Thus $M$ belongs to $\R(\p)$.

(2) By \cite[Lemma 1.4]{CFF}, there exists $\p \in \spec{R} $ such that $\depth R_\p = \dim R - 1 > 0$.

(3a) We have $\rfd_{R_\p}{M_\p} \geq \depth R_\p - \depth M_\p = \depth R_\p > 0$ by \cite[Observation 2.12]{CFF} and (2).
Since $M$ belongs to $\R(\p)$ by (1), $(\Y) \Rightarrow (\s)$ does not hold for these $M$ and $\p$.
Hence the last assertion follows from Theorem \ref{19}(2).

(3b) We have $\Rfd_R{M} \geq \depth{R_\p} - \depth{M_\p} = \depth{R_\p} > 0$ by (2).
Also, $\rfd_R{M} \leq \depth{R} = 0$ by \cite[(2.12.1)]{CFF}.
Hence $(\s) \Rightarrow (\L)$ does not hold for these $M$ and $\m$.
Thus the last assertion follows from Theorem \ref{19}(2).

(3c) We have $\Rfd'_R{R} \geq \depth{R_\p} - \grade(\p,R) = \depth{R_\p} > 0 = \Rfd_R{R}$ by (2).
\end{proof}

\begin{rem}
The table below forming an $6\times6$ matrix describes the relationships among those five conditions which we have discussed so far.
Here, the symbol ``$\bigcirc$"(resp. ``$\times$") in the $(i,j)$ entry means that the implication from the condition placed in the $(i,1)$ entry to the condition placed in the $(1,j)$ entry always holds (resp. does not always hold).
\begin{center}
\begin{tabular}{|l|c|c|c|c|c|}
  \hline
     &  $(\L)'$ & $(\L)$ & $(\s)$ & $(\G)$ & $(\Y)$\\
  \hline
  $(\L)'$ & $\bigcirc$ & $\bigcirc$ & $\bigcirc$ & $\bigcirc$ & $\bigcirc$\\
  \hline
  $(\L)$  & $\times$ & $\bigcirc$ & $\bigcirc$ & $\bigcirc$ & $\bigcirc$\\
  \hline
  $(\s)$  & $\times$ & $\times$ & $\bigcirc$ & $\bigcirc$ & $?$\\  
  \hline
  $(\G)$  & $\times$ & $\times$ & $\times$ & $\bigcirc$ & $\bullet$\\ 
  \hline
  $(\Y)$ & $\times$ & $\times$ & $\times$ & $\bigcirc$ & $\bigcirc$\\
  \hline
\end{tabular}
\end{center}
Our main interest is whether the implication corresponding to the symbol ``$?$" in the $(4,6)$ entry holds or not, which is the implication $(\s) \Rightarrow (\Y)$. Note that if we find a counterexample to this implication, then it is also a counterexample to the implication $(\G) \Rightarrow (\Y)$, which corresponds to the symbol ``$\bullet$" in the $(5,6)$ entry.
\end{rem}

Finally, we prove the following result, which is none other than Corollary \ref{22}, by using Theorem \ref{19}(2).
This result removes from \cite[Corollary 4.3]{TY} all the assumptions, that is, the assumption that the resolving subcategory $\R(\p)$ of $\mod{R}$ is dominant, that $\sup_{\q \in \U(\p)} \left\{\depth R_\q - \depth M_\q \right\} \geq 0$, and that $\depth R_\q = \grade(\q,R)$ for all $\q \in \U(\p)$.
Note that, in the case $\Rfd_{R_\p} M_\p = -\infty$, we get $-(-\infty)$, which is interpreted as $\infty$.
\begin{cor}\label{5}
\rm{(cf. \cite[Corollary 4.3]{TY})}
Let $M$ be a finitely generated $R$-module, and let $\p$ be a prime ideal of $R$.
Then there exists an element $s \in R \setminus \p$ such that 
$s H_I^i(M) = 0$ for all ideals $I$ of $R$ and all integers $i < \grade(I,R) - \Rfd_{R_\p} M_\p$.
\end{cor}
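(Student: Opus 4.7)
The plan is to pick a sufficiently high syzygy $N$ of $M$ so that $\Rfd_{R_\p}N_\p \le 0$, apply the implication $(\L)\Rightarrow(\Y)$ of Theorem \ref{19}(2) to produce a uniform annihilator $s$ for $N$, and then transport $s$ back to $M$ via a dimension-shift in local cohomology. Set $r = \Rfd_{R_\p}M_\p$. If $r = -\infty$, then $M_\p = 0$, so by finite generation there is $s \in R\setminus\p$ with $sM = 0$, which annihilates every $H_I^i(M)$; under the convention $\grade(I,R) - (-\infty) = \infty$ this case is settled. We may therefore assume $r \in \mathbb{N}$.

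Next, I would take a free resolution $\cdots \to F_1 \to F_0 \to M \to 0$ and put $N = \syz_R^r M$. Localizing the short exact sequences $0 \to \syz_R^{j+1}M \to F_j \to \syz_R^j M \to 0$ at an arbitrary $\q \in \U(\p)$ and applying the depth lemma \cite[Proposition 1.2.9]{BH} inductively on $j$ yields
$$
\depth N_\q \;\ge\; \min\{\depth R_\q,\; \depth M_\q + r\}.
$$
By the definition of $r$ we have $\depth R_\q - \depth M_\q \le r$ for every $\q \in \U(\p)$, so $\depth M_\q + r \ge \depth R_\q$ and hence $\depth N_\q \ge \depth R_\q$. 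This shows $\Rfd_{R_\p}N_\p \le 0$, and Theorem \ref{19}(2) then produces $s \in R\setminus\p$ with $s H_I^j(N) = 0$ for every ideal $I$ of $R$ and every integer $j < \grade(I,R)$.

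To transfer the annihilator to $M$, apply local cohomology to each short exact sequence $0 \to \syz_R^{j+1}M \to F_j \to \syz_R^j M \to 0$. Since each $F_j$ is free and $H_I^\ell(R) = 0$ for $\ell < \grade(I,R)$, the connecting map gives an isomorphism $H_I^i(\syz_R^j M) \cong H_I^{i+1}(\syz_R^{j+1}M)$ whenever $i + 1 < \grade(I,R)$. Iterating $r$ times, for every $i < \grade(I,R) - r$ we obtain
$$
H_I^i(M) \;\cong\; H_I^{i+r}(N),
$$
which is annihilated by $s$ since $i + r < \grade(I,R)$. Hence $sH_I^i(M) = 0$ for all ideals $I$ of $R$ and all $i < \grade(I,R) - \Rfd_{R_\p}M_\p$, as required.

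The only delicacy in the argument is keeping the indices straight in the depth-lemma induction and the dimension shift; both are routine once the correct target syzygy is identified. The real content is to recognize that Corollary \ref{3} (dominance of $\R(\p)$) combined with the $(\L)\Rightarrow(\Y)$ direction of Theorem \ref{19}(2) is precisely what removes the dominance hypothesis and the depth/grade hypotheses present in \cite[Corollary 4.3]{TY}.
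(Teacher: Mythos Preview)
Your proof is correct and follows essentially the same strategy as the paper: reduce to a syzygy with nonpositive large restricted flat dimension over $R_\p$, invoke $(\L)\Rightarrow(\Y)$ from Theorem \ref{19}(2), and then dimension-shift in local cohomology through the free resolution. The only organizational difference is that the paper argues by induction on $r$ (passing to $\syz_R^1 M$ and showing $\Rfd_{R_\p}(\syz_R^1 M)_\p = r-1$), whereas you pass directly to $N=\syz_R^r M$ and iterate the depth lemma and the connecting isomorphisms $r$ times; this is the same argument unwound.
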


\begin{proof}
Put $ r = \Rfd_{R_\p} M_\p $.
Note that $r \in \mathbb{N} \cup \{-\infty \}$.
If $ r = - \infty $, then $ M_\p = 0 $.
Hence there exists an element $ s \in R \setminus \p $ such that $ s M = 0 $.
Since local cohomology functors are $R$-linear by \cite[Properties 1.2.2]{BS}, we obtain $ s H_I^i(M) = 0 $ for all ideals $I$ of $R$ and all integers $ i $.
Let $ r \geq 0 $.
We use induction on $r$.
In the case $ r = 0 $, our assertion follows immediately from Theorem \ref{19}(2).
Suppose that $ r \geq 1 $.
Then the depth lemma \cite[Proposition 1.2.9]{BH} yields
\begin{align*}
\Rfd_{R_\p}(\syz_R^1 M)_\p
&= \sup_{\q \in \U(\p)}\{\depth{R_\q} - \depth{(\syz_R^1 M)_\q}\} \\
&= \sup_{\q \in \U(\p)}\{\depth{R_\q} - \inf\{\depth{M_\q} + 1, \depth{R_\q}\}\} \\
&= \sup\{\sup_{\q \in \U(\p)}\{\depth{R_\q} - \depth{M_\q}\}-1,0\} \\
&= \sup\{\Rfd_{R_\p}M_\p - 1, 0\}\\
&= r-1.
\end{align*}
Using the induction hypothesis, we find an element $ s \in R \setminus \p $ such that $s H_I^i(\syz_R^1 M) = 0 $ for all ideals $I$ of $R$ and all integers $i < \grade(I,R) - r + 1$.
We take an exact sequence $0 \to \syz_R^1 M \to P \to M \to 0$ with a finitely generated projective $R$-module $P$.
This provides an exact sequence $H_I^i(P) \to H_I^i(M) \to H_I^{i + 1}(\syz_R^1 M) \to H_I^{i + 1}(P)$ for all ideals $I$ of $R$ and all integers $i$.
Since $P$ is a direct summand of a free module, \cite[Theorem 6.2.7]{BS} yields $H_I^i(P) = 0$ for all ideals $I$ of $R$ and all integers $i < \grade(I,R)$.
Thus, from the above exact sequence, we get an isomorphism $H_I^i(M) \cong H_I^{i+1}(\syz_R^1 M)$ for all $i < \grade(I,R)- 1$.
Hence $s H_I^i(M) \cong s H_I^{i + 1}(\syz_R^1 M)= 0$ for all ideals $I$ of $R$ and all integers $i < \grade(I,R) - r$, as $r \geq 1$.
The proof of the corollary is completed.
\end{proof}
\begin{ac}
The author is grateful to Takeshi Yoshizawa for useful comments.
The author also appreciates the support of his advisor Ryo Takahashi.
\end{ac}


\begin{thebibliography}{99}
\bibitem{AIL}
{\sc L. L. Avramov; S. B. Iyengar; J. Lipman}, Reflexivity and rigidity for complexes, I, Commutative rings, {\em Algebra Number Theory} {\bf 4} (2010), no. 1, 47-86.
\bibitem{BS}
{\sc M. P. Brodmann; R. Y. Sharp}, Local cohomology: an algebraic introduction with geometric applications. Cambridge Studies in Advanced Mathematics, {\bf 60}, {\it Cambridge University Press, Cambridge}, 1998.
\bibitem{BH}
{\sc W. Bruns; J. Herzog}, Cohen-Macaulay rings, revised edition, Cambridge Studies in Advanced Mathematics {\bf 39}, {\it Cambridge University Press, Cambridge}, 1998.
\bibitem{CFF}
{\sc L. W. Christensen; H.-B. Foxby; A. Frankild}, Restricted homological dimensions and Cohen-Macaulayness, {\em J. Algebra} {\bf 251} (2002), no. 1, 479-502.
\bibitem{Fal}
{\sc G. Faltings}, \"{U}ber die Annulatoren lokaler Kohomologiegruppen. {\em Arch. Math. (Basel)} {\bf30} (1978) no. 5, 473-476.
\bibitem{AG}
{\sc A. A. Gerko}, On homological dimensions. {\em Mat. Sb}. {\bf 192} (2001), no. 8, 79-94; translation in {\em Sb. Math.} {\bf 192} (2001), no. 7-8, 1165-1179.
\bibitem{RT}
{\sc R. Takahashi}, Classification of dominant resolving subcategories by moderate functions, {\em Illinois J.Math.} {\bf 65} (2021), no. 3, 597-618.
\bibitem{TY}
{\sc T. Yoshizawa}, Annihilators of local cohomology modules via a classification theorem of the dominant resolving subcategories. {\tt arXiv:2105.00692v2}
\end{thebibliography}
\end{document}